\newcommand{\figref}[1]{Fig.~\ref{#1}}
\newcommand{\bmtx}{\begin{bmatrix}}
\newcommand{\emtx}{\end{bmatrix}}
\newcommand{\bsmtx}{\left[ \begin{smallmatrix}} 
\newcommand{\esmtx}{\end{smallmatrix} \right]}
\newcommand{\field}[1]{\mathbb{#1}}
\newcommand{\R}{\field{R}}
\newcommand{\Sm}{\field{S}}
\newtheorem{defn}{Definition}
\newtheorem{theorem}{Theorem}
\newtheorem{rem}{Remark}
\tikzset{>=stealth'}
\colorlet{dgreen}{black!60!green}
\colorlet{lgreen}{green!10!white}
\colorlet{axis}{gray!80!white}
\title{\LARGE \bf Robust Control Barrier Functions with Sector-Bounded Uncertainties}
\author{Jyot Buch, Shih-Chi Liao, Peter Seiler
  \thanks{This work was funded by the US ONR grant N00014-18-1-2209.}
  \thanks{Jyot Buch is with the Department of Aerospace Engineering and Mechanics at the University of Minnesota, Twin Cities, {\tt\small buch0271@umn.edu}}
  \thanks{Shih-Chi Liao and Peter Seiler are with the Electrical Engineering and Computer Science department at the University of Michigan, Ann Arbor, {\tt\small \{shihchil,pseiler\}@umich.edu}}}
\begin{document}
	\setlength{\textfloatsep}{0.2in}
	\setlength{\floatsep}{0.2in}
\maketitle
\thispagestyle{empty}
\pagestyle{empty}
\tikzset{every picture/.style={thick,scale=1},every node/.style={scale=1}}

\begin{abstract}
This paper focuses on safety critical control with sector-bounded uncertainties at the plant input. The uncertainties can represent nonlinear and/or time-varying components. We propose a new robust control barrier function (RCBF) approach to enforce safety requirements in the presence of these uncertainties. The primary objective is to minimally alter the given baseline control command to guarantee safety in the presence of modeled uncertainty. The resulting min-norm optimization problem can be recast as a Second-Order Cone Program (SOCP) to enable online implementation. Properties of this controller are studied and a numerical example is provided to illustrate the effectiveness of this approach.
\end{abstract}

\section{Introduction}
\label{intro}

This paper presents safety critical control for a plant with sector-bounded input uncertainties. There are many applications including autonomous driving, medical or industrial robotics, and aerospace vehicles that require prioritizing safety over performance objectives~\cite{knight2002safety}. One of the popular methods to encode safety is by means of Control Barrier Functions (CBF), which can be used as a constraint in a quadratic program to modify control actions to adhere to safety specifications~\cite{ames2016control,ames2019control}. CBFs can be designed using a representative or a surrogate model of the system~\cite{ames2019control} or can be learned online~\cite{taylor2020learning,choi2020reinforcement}. Often, for simplicity, actuator nonlinearities are ignored, which raises robustness concerns. 

Our approach is to characterize the input-output behavior of nonlinearities at the plant input using point-wise in time quadratic constraints. We then use the robust control barrier functions (RCBFs) presented in Section~\ref{sec:RCBF} to provide safety guarantees for the entire uncertainty set. This is done by ensuring that a safe action always exists for all nonlinearites at the modeled uncertainty level. This combines a traditional robust control approach with the CBF methods for safety critical control. As a result, we obtain more cautious trajectories when close to the unsafe region. 

There are three main contributions of the paper. First, we present a new robust control barrier function based approach to handle sector-bounded uncertainties at the plant input. This allows us to handle nonlinearities and time-varying memoryless uncertainties described by a quadratic constraint. Second, we formulate an optimization problem that minimally alters the control command to guarantee safety in the presence of modeled input uncertainty. This optimization problem can be rewritten in terms of a second-order cone program (SOCP) to be solved online. Finally, the proposed approach is demonstrated using a lateral vehicle control example to study robust safety.

There is a large body of literature on CBFs with a good overview provided in~\cite{ames2019control}. Only the most closely related work is summarized here. Robust control barrier functions are presented for guaranteeing safety in the presence of $\mathcal{L}_\infty$ bounded disturbances in~\cite{xu2015robustness,garg2021robust,breeden2021robust} and stochastic disturbances in~\cite{takano2018application}. The work in~\cite{nguyen2021robust} also considers robust CBFs to account for the changes in the dynamics as a perturbation to the vector field. A key distinction is that the input nonlinearities in our work lead to uncertainties that depend on the control decisions, which is not allowed in the framework of~\cite{xu2015robustness,garg2021robust,breeden2021robust,nguyen2021robust}. In this paper, we provide safety guarantees for static nonlinearities and/or time-varying memoryless uncertainties at the plant input. The most recent work in~\cite{pete2021arxiv} considers a more general class of unmodeled dynamics (e.g. unknown time-delays, actuation lag, etc.) using $\alpha$-IQCs and CBFs. The price for this generality is that the trajectories tend to be even more conservative than those obtained with the method presented here. Other related work on robust CBFs includes~\cite{jankovic2018robust,choi2021robust,dean2020guaranteeing}.

\noindent\textbf{Notation:} Let $\R^{n \times m}$ and $\Sm^{n}$ denote the sets of $n$-by-$m$ real matrices and $n$-by-$n$ real, symmetric matrices. The Euclidean norm of a vector $\mathbf{v}\in\R^m$ is defined as $\|\mathbf{v}\|_2 := \sqrt{\mathbf{v}^\top \mathbf{v}}$. A continuous function $\eta :\R\rightarrow\R$ is called extended class-$\mathcal{K}_{\infty}$ $(\mathcal{K}_{\infty,e})$ if it is strictly monotonically increasing and satisfies $\eta(0) = 0, \lim_{r\rightarrow -\infty} \eta(r) = -\infty$, and $\lim_{r\rightarrow \infty} \eta(r) = \infty$. 

\section{Preliminaries}
\label{sec:prelim}

\subsection{Problem Formulation}
\label{sec:problem}
Consider the design interconnection as shown in~\figref{fig:designic}. The uncertain plant $P$ is described as a series interconnection of known part $G$ and an unknown perturbation $\phi$ at the plant input. 
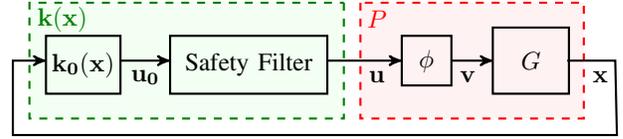
\begin{figure}
	\centering
	\begin{tikzpicture}[thick,scale=1.1,every node/.style={scale=0.95}]
	\draw [dashed,green!50!black, fill=green!5!white] (-4.2,1.1) rectangle (-8,-0.3);
	\draw [dashed,red,fill=red!5!white] (-4,1.1) rectangle (-1.3,-0.3);
	\draw (-2.4,0) rectangle node{$G$}(-1.48,0.8);
    \node at (-6.6,0.2) {$\mathbf{u_0}$};
    \node at (-3.8,0.2) {$\mathbf{u}$};
    \node at (-1.1,0.2) {$\mathbf{x}$};
    \node at (-2.7,0.2) {$\mathbf{v}$};
    \draw (-7.8,0.7) rectangle node{$\mathbf{k_0}(\mathbf{x})$}(-6.9,0);
    \draw (-6.3,0.7) rectangle node{Safety Filter}(-4.4,0);
    \draw (-3.5,0.7) rectangle node{$\phi$}(-2.9,0.1);
    \draw [->](-1.5,0.4) -- (-0.92,0.4) -- (-0.9,-0.5) -- (-8.2,-0.5) -- (-8.2,0.4) -- (-7.8,0.4);
    \draw [->](-6.9,0.4) -- (-6.3,0.4);
    \draw [->](-2.9,0.4) -- (-2.4,0.4);
    \draw [->](-4.4,0.4) -- (-3.5,0.4);
    \node [red] at (-3.8,0.9) {$P$};
    \node [green!50!black] at (-7.6,0.9) {$\mathbf{k(x)}$};
	\end{tikzpicture}
	\caption{Uncertain State-Feedback Design Interconnection}
	\label{fig:designic}
\end{figure}
This perturbation represents nonlinearities and/or time-varying, memoryless uncertainties. We will refer to $\phi$ as a ``nonlinearity" for simplicity. Let $P$ be given with the following input-affine dynamics:
\begin{align}
    \label{eq:system}
    \begin{split}
        \dot{\mathbf{x}}(t) &= \mathbf{f}(\mathbf{x}(t)) + \mathbf{g}(\mathbf{x}(t)) \,\mathbf{v}(t), \hspace{0.2in} \mathbf{x}(0) = \mathbf{x}_0\\
	    \mathbf{v}(t) &= \phi(\mathbf{u}(t),t)
    \end{split}
\end{align}
where $\mathbf{x}(t) \in D\subset \R^{n}$ is the state, $\mathbf{u}(t) \in \mathcal{U} \subset \R^{m}$ is the admissible control input and $\mathbf{v}(t) \in \mathcal{V}(\mathbf{u}(t)) \subset \R^{m}$ is the uncertain input. Moreover, $\mathbf{f}:D\subset \R^n \rightarrow \R^n$ and $\mathbf{g}:D\subset\R^n \rightarrow \R^{n\times m}$ are locally Lipschitz continuous functions of the state $\mathbf{x}$. It is assumed that the dynamics given by~\eqref{eq:system} are defined on open set $D\subset \R^{n}$ and are forward complete, i.e. for every initial condition $\mathbf{x}(0) \in D$, there exists a unique solution $\mathbf{x}(t)$ for all $t\geq 0$. The nonlinearity $\phi$ is assumed to lie in a sector $\left[\alpha,\beta\right]$ with $0 < \alpha \leq 1 \leq\beta$ so that the sector-bound contains the nominal case $\mathbf{v}=\mathbf{u}$. This sector-bound can be written as the following point-wise in time quadratic constraint (Section 6.1 of~\cite{khalil2002nonlinear}):
\begin{align}
    \label{eq:sectorqc}
    \left[\mathbf{v}(t)-\alpha \mathbf{u}(t)\right]^\top \left[\beta \mathbf{u}(t)-\mathbf{v}(t)\right]\geq 0,\,\, \forall t \geq 0.
\end{align}
If $m=1$ then the single control channel sector-bound can be illustrated as shown in~\figref{fig:sectorbnd}. The uncertainty $\phi$ lies in a sector $\left[\alpha,\beta\right]$ when it can be bounded by two lines with slopes of $\alpha$ and $\beta$, respectively. The shaded gray region represents the allowable uncertainty set. In more general setting when $m\neq 1$, the constraint~\eqref{eq:sectorqc} allows cross-coupling between the input channels. 
\begin{figure}
	\centering
	\begin{tikzpicture}[thick,scale=1,rounded corners = 0.5mm,every node/.style={scale=0.9}]
	\draw [->,axis](2.9,-1) -- (7.1,-1);
	\draw [->,axis](5,-2.8) -- (5,0.9);
	\draw[white,fill=gray!15!white] (6.9,0.1) -- ++(-0.8,0.7) -- ++(-1.1,-1.8);
	\draw[white,fill=gray!15!white] (3.9,-2.8) -- ++(1.1,1.8) -- ++(-1.9,-1.1);
	\draw [-,black](6.1,0.8) -- (3.9,-2.8);
	\draw [-,black](6.9,0.1) -- (3.1,-2.1);
	\node at (7,-1.2) {$u$};
	\node at (4.8,0.8) {$v$};
	\node [black] at (7.1,-0.2) {$\alpha u $};
	\node [black] at (6,1) {$\beta u$};
	\node [red!90!black] at (7.1,0.6) {$v = \phi(u,t)$};
	\draw [red!90!black,very thick] plot[smooth, tension=.7] coordinates { (3.4,-2.4) (3.9,-2.2) (4.1,-1.7) (4.7,-1.3) (5.3,-0.7) (5.7,-0.3) (6.1,0.2) (6.7,0.4)};
	\end{tikzpicture}
	\caption{Illustration of SISO Nonlinearity $\phi \in \left[\alpha,\beta\right]$}
	\label{fig:sectorbnd}
\end{figure}
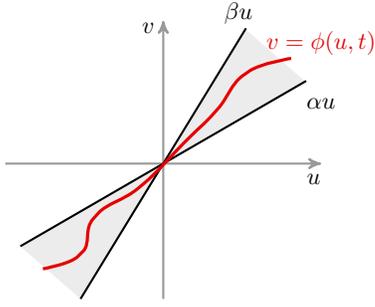

We assume that a locally Lipschitz continuous function $\mathbf{k_0}:D\subset\R^n\rightarrow\mathcal{U} \subset\R^m$ is given such that the baseline (not necessarily safe) control law is $\mathbf{u_0} = \mathbf{k_0}(\mathbf{x})$. The notion of safety is formalized by defining a safe set $\mathcal{C} \subset D \subset \R^n$ in the state space that the system must remain within~\cite{ames2016control}. In particular, consider the set $\mathcal{C}$ as the zero-superlevel set of a continuously differentiable function $h : D\subset\R^n \rightarrow \R$:
\begin{align}
\label{eq:safeset}
    \mathcal{C} &\triangleq \{ \mathbf{x} \in D \subset \R^n : h(\mathbf{x}) \geq 0\}
\end{align}
The boundary and interior of the safe set are denoted as $\partial\mathcal{C}$ and $\mbox{Int}(\mathcal{C})$, respectively. It is assumed that zero is a regular value of $h$ and $\mathcal{C}$ is non-empty with no isolated points. These assumptions mean that $h(\mathbf{x}) = 0$ implies $\frac{\partial h}{\partial \mathbf{x}} (\mathbf{x}) \neq 0$, Int$(\mathcal{C}) \neq \emptyset$, and $\overline{\mbox{Int}(\mathcal{C})} = \mathcal{C}$. Explicit time dependence of variables are omitted when it is clear from the context. It is assumed that the initial condition is in the safe set i.e. $\mathbf{x}_0 \in \mathcal{C}$. 

Our primary goal is to design a safety filter in~\figref{fig:designic} that minimally alters the baseline control command $\mathbf{u_0}$ so the state of the closed-loop system remains safe even in the presence of the nonlinearity. The proposed solution in Section~\ref{sec:optcontrol} is an optimization problem that can be solved online to compute safe control action $\mathbf{u}\in\mathcal{U}$.

\subsection{Background}
\label{sec:back}

This section provides a brief summary on set-invariance using control barrier functions~\cite{ames2016control}. First, consider the nominal case i.e. without nonlinearities at the plant input. In this case, $\mathbf{v} = \mathbf{u}$. If we let $\mathbf{u} = \mathbf{k(x)}$ then the closed-loop dynamics are given by:
\begin{align}
\label{eq:clsys}
     \dot{\mathbf{x}} = \mathbf{f_{cl}}(\mathbf{x}) = \mathbf{f}(\mathbf{x}) + \mathbf{g}(\mathbf{x}) \,\mathbf{k}(\mathbf{x}), \hspace{0.2in} \mathbf{x}(0) = \mathbf{x}_0
\end{align}
In the context of the autonomous system above, safety is synonymous with the forward invariance of~$\mathcal{C}$~\cite{ames2016control}:
\begin{defn}[Forward Invariance and Safety]
 A set $\mathcal{C} \subset D \subset \R^n$ is forward invariant if for every initial condition $\mathbf{x_0} \in \mathcal{C}$, the solution to the closed-loop system~\eqref{eq:clsys} satisfies $\mathbf{x}(t) \in \mathcal{C}$ for all $t \geq 0$. The system~\eqref{eq:clsys} is safe with respect to $\mathcal{C}$ if $\mathcal{C}$ is forward invariant.
\end{defn}
As mentioned in the previous section, the baseline control law $\mathbf{u_0}=\mathbf{k_0(x)}$ is not necessarily safe. In this case, we would like to ensure that a safe action $\mathbf{u}$ exists which can steer the system to remain within the safe set $\mathcal{C}$. This is formalized by defining the notion of control invariance.
\begin{defn}[Control Invariance]
A set $\mathcal{C}$ is control invariant if there exists a controller $\mathbf{k}:D\subset\R^n\rightarrow\mathcal{U}\subset\R^m$ such that $\mathcal{C}$ is forward invariant with respect to the system~\eqref{eq:clsys}.
\end{defn}
Control barrier functions are used to ensure control invariance for the nominal plant. Let $L_\mathbf{f}h:= \frac{\partial h}{\partial \mathbf{x}} \mathbf{f}$ and $L_\mathbf{g}h:=\frac{\partial h}{\partial \mathbf{x}} \mathbf{g}$ denote the Lie derivatives of $h$ with respect to $\mathbf{f}$ and $\mathbf{g}$. 
\begin{defn}[Control Barrier Function]
 Let $\mathcal{C} \subset D \subset \R^n$ be the zero-superlevel set of a continuously differentiable function $h : D\subset\R^n \rightarrow \R$. Then $h$ is a Control Barrier Function if there exists $\eta \in \mathcal{K}_{\infty,e}$ such that for all $\mathbf{x}\in D$:
\begin{align}
    \label{eq:CBF}
    \sup_{\mathbf{u}\in \mathcal{U}}
    \left[ L_\mathbf{f}h(\mathbf{x}) + L_\mathbf{g}h(\mathbf{x})\, \mathbf{u}\right] \geq -\eta(h(\mathbf{x}))
\end{align}
\end{defn}
The existence of a control barrier function $h$ satisfying constraint~\eqref{eq:CBF} implies that if the state reaches the boundary of~$\mathcal{C}$, then the control input $\mathbf{u}$ can be used to prevent the state from entering the unsafe region. The main result in~\cite{ames2016control,ames2019control} can be used to design a Lipschitz continuous safety-filter that yields safety for the nominal closed-loop. The specific implementation involves solving the following quadratic program online with CBF constraint to minimally alter the baseline action $\mathbf{u_0}$. 
\begin{align}
    \tag{CBF-QP}
    \label{eq:CBFQP}
    \mathbf{u}^*(\mathbf{x}) =\, &\arg \min_{\mathbf{u}\in\mathcal{U}} \frac{1}{2} \|\mathbf{u} - \mathbf{u_0}\|^2_2\\
    &\mbox{s.t. }  L_\mathbf{f}h(\mathbf{x}) + L_\mathbf{g}h(\mathbf{x})\, \mathbf{u} \geq -\eta(h(\mathbf{x}))\nonumber
\end{align}
In general, constructing a CBF $h$ is not straightforward and often requires careful consideration~\cite{ames2019control}. 

Next, consider the uncertain case with the sector-bounded nonlinearity $\phi$ at the plant input. This nonlinearity can alter the control command resulting in a safety violation. The uncertain closed-loop dynamics with the nonlinearity can be written as follows:
\begin{align}
\label{eq:uclsys}
     \dot{\mathbf{x}} = \mathbf{f_{ucl}}(\mathbf{x}) = \mathbf{f}(\mathbf{x}) + \mathbf{g}(\mathbf{x}) \,\phi(\mathbf{k}(\mathbf{x}),t), \hspace{0.1in} \mathbf{x}(0) = \mathbf{x}_0
\end{align}
The notion of robust control invariance~(Definition 4.4 in~\cite{blanchini2015set}) is useful to guarantee safety in the presence of uncertainties. A specific definition for the sector-bounded nonlinearity is given next.
\begin{defn}[Robust Control Invariance and Robust Safety]
A set $\mathcal{C}$ is robust control invariant if there exists a controller $\mathbf{k}:D\subset\R^n\rightarrow\mathcal{U}\subset\R^m$, such that $\mathcal{C}$ is forward invariant with respect to the uncertain closed-loop~\eqref{eq:uclsys} for all nonlinearities $\phi$ in a sector $[\alpha,\beta]$. The system~\eqref{eq:uclsys} is robustly safe with respect to $\mathcal{C}$ if $\mathcal{C}$ is robust control invariant.
\end{defn}
Robust control invariance can be ensured by showing the existence of a robust control barrier function as presented in the following section.

\section{Main Results}
\label{sec:mainres}

\subsection{Uncertainty Mapping}
\label{sec:uncmapping}

The first step is to use a loop-shifting transformation (Section 6.5 of~\cite{khalil2002nonlinear}) to map the nonlinearity $\phi \in [\alpha,\beta]$ into a normalized, input additive form, as in standard robust control workflow. To make this precise, define $\Delta:\R^m\times\R \rightarrow \R^m$ such that $\mathbf{v}(t)=\phi(\mathbf{u}(t),t)$ is mapped to:
\begin{align}
   \mathbf{v}(t) = \frac{1}{2}\, (\alpha+\beta) (\mathbf{u}(t) + \Delta(\mathbf{u}(t),t))
\end{align}
The mapped nonlinearity satisfies $\Delta \in [-\theta, +\theta]$ where $\theta:= (\beta-\alpha)/(\beta+\alpha)$. This re-centers the sector-bound to $0$ and separates the nominal control action $\mathbf{u}(t)$ and the uncertain control command $\Delta(\mathbf{u}(t),t)$. The factor $\frac{1}{2}\, (\alpha+\beta)$ scales the input function $\mathbf{g}(\mathbf{x})$ to be $\mathbf{\tilde{g}}(\mathbf{x}) := \frac{1}{2}\,(\alpha+\beta)\,\mathbf{g}(\mathbf{x})$. This yields the following input-affine system with mapped nonlinearity:
\begin{align}
\label{eq:mappedsys}
\begin{split}
\dot{\mathbf{x}}(t) &= \mathbf{f}(\mathbf{x}(t)) + \mathbf{\tilde{g}}(\mathbf{x}(t)) \,(\mathbf{u}(t) + \mathbf{w}(t)), \hspace{0.1in} \mathbf{x}(0) = \mathbf{x}_0\\
    \mathbf{w}(t) &= \Delta(\mathbf{u}(t),t)
\end{split}
\end{align}
Assume the uncertainty level satisfies $0\leq\theta<1$. The symmetric sector constraint on the mapped nonlinearity $\Delta$ corresponds to a norm bound:
\begin{align}
    \label{eq:normbnd}
    \|\mathbf{w}(t)\|_2 \leq \theta \|\mathbf{u}(t)\|_2, \,\, \forall t \geq 0.
\end{align}
Note that as $\theta\rightarrow 0$, i.e. as $\alpha$ and $\beta$ both tend to $1$, then $\mathbf{w}(t) \rightarrow 0$ and we recover the nominal plant $G$. For simplicity, the remainder of the paper considers the system~\eqref{eq:mappedsys} with mapped nonlinearity $\Delta$ instead of the original system~\eqref{eq:system} with $\phi$.
Let $\mathcal{W}(\mathbf{u}(t))$ denote the set of uncertain inputs $\mathbf{w}(t)$ satisfying the norm bound constraint~\eqref{eq:normbnd}. The set $\mathcal{W}(\mathbf{u}(t))$ depends on the control input $\mathbf{u}$ at time $t$. Thus, the uncertain input $\mathbf{w}(t)$ can not be simply treated as an exogenous disturbance input, as it depends on $\mathbf{u}(t)$ through the nonlinearity $\Delta$.

\subsection{Robust Control Barrier Functions (RCBF)}
\label{sec:RCBF}
Robust control barrier functions defined in this section can be used to synthesize controllers ensuring the safety of the uncertain closed-loop system with respect to a given set $\mathcal{C}$. 
\begin{defn}
\label{def:RCBF}
Let $\mathcal{C} \subset D \subset \R^n$
be a safe set given by~\eqref{eq:safeset}. The function $h$ is a Robust Control
Barrier Function for~\eqref{eq:mappedsys} if there exists $\eta \in \mathcal{K}_{\infty,e}$ 
such that for all $x\in D$:
\begin{align}
    \label{eq:RCBF}
    \sup_{\mathbf{u}\in \mathcal{U}}\, \inf_{\mathbf{w}\in\mathcal{W}}  \left[ L_\mathbf{f}h(\mathbf{x}) + L_\mathbf{\tilde{g}}h(\mathbf{x}) (\mathbf{u} + \mathbf{w}) \right] \geq -\eta(h(\mathbf{x}))
\end{align}
\end{defn}
The nonlinearity $\Delta$ can, in the worst case, yield an uncertain input $\mathbf{w}$ that minimizes the left side of inequality~\eqref{eq:RCBF}. We aim to choose a single control input $\mathbf{u} \in\mathcal{U}$ for all uncertain inputs $\mathbf{w}\in\mathcal{W}$ satisfying the constraint~\eqref{eq:normbnd}. The worst-case uncertain input $\mathbf{w}^*(\mathbf{u})$ after solving the inner optimization problem is given by:
\begin{align}
    \label{eq:wcw}
    \mathbf{w}^*(\mathbf{u}) = - \theta \|\mathbf{u}\|_2 \frac{L_\mathbf{\tilde{g}}h(\mathbf{x})^\top}{\|L_\mathbf{\tilde{g}}h(\mathbf{x})\|_2}
\end{align}
This follows from the linear cost in~\eqref{eq:RCBF}, but a more formal argument using Lagrange relaxation is given in Appendix~\ref{sec:wcw}. It can be verified that as $\theta\rightarrow0$, we have $\mathbf{w}^*(\mathbf{u})\rightarrow0$. Plugging in for $\mathbf{w^*}(\mathbf{u})$ in condition~\eqref{eq:RCBF} yields:
\begin{align}
    \label{eq:RCBF_wc}
    \sup_{\mathbf{u}\in \mathcal{U}}\, \left[ L_\mathbf{f}h(\mathbf{x}) + L_\mathbf{\tilde{g}}h(\mathbf{x}) (\mathbf{u} + \mathbf{w^*}(\mathbf{u})) \right] \geq -\eta(h(\mathbf{x}))
\end{align}
Define $p(\mathbf{x}) := L_\mathbf{f}h(\mathbf{x}) + \eta(h(\mathbf{x}))$ and the set of all control actions that render the set $\mathcal{C}$ robustly safe as follows:
\begin{align}
    \mathcal{U}_{RCBF}(\mathbf{x}) := \{ \mathbf{u} \in \mathcal{U} : p(\mathbf{x}) + L_\mathbf{\tilde{g}}h(\mathbf{x}) (\mathbf{u} + \mathbf{w^*}(\mathbf{u}))\nonumber \geq 0 \}
\end{align}
The feasibility of RCBF constraint~\eqref{eq:RCBF} ensures that the above set is nonempty. This implies that if the state reaches the boundary of $\mathcal{C}$ then there exists a control input to prevent the state of the uncertain closed-loop from crossing out of the safe set. Thus, the existence of a robust control barrier function implies that the system is robustly safe. This statement is formalized in the next theorem, which can be viewed as a robust version of the main result in~\cite{ames2016control}.
\begin{theorem}
Let $\mathcal{C} \subset D$ be a safe set defined using~\eqref{eq:safeset} as the superlevel set of a continuously differentiable function $h : D\subset\R^n \rightarrow \R$. If $h$ is a robust control barrier function on $D$ and $\frac{\partial h}{\partial \mathbf{x}}(\mathbf{x}) \neq 0$ for all $\mathbf{x} \in \partial \mathcal{C}$, then any Lipschitz continuous controller $\mathbf{u}(\mathbf{x}) \in \mathcal{U}_{RCBF}(\mathbf{x})$ for the system~\eqref{eq:mappedsys} renders the set $\mathcal{C}$ robust control invariant.
\end{theorem}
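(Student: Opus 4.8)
The plan is to reduce robust forward invariance of $\mathcal{C}$ to a single differential inequality satisfied by $h$ along every closed-loop trajectory, and then invoke the comparison-lemma argument that underlies the nominal CBF result of~\cite{ames2016control}. Fix any admissible nonlinearity, equivalently any measurable uncertain input $\mathbf{w}(\cdot)$ obeying the norm bound~\eqref{eq:normbnd} pointwise, and substitute the given Lipschitz controller $\mathbf{u}(\mathbf{x})\in\mathcal{U}_{RCBF}(\mathbf{x})$ into the mapped dynamics~\eqref{eq:mappedsys}. Forward completeness guarantees an absolutely continuous solution $\mathbf{x}(t)$ on $[0,\infty)$, so that $t\mapsto h(\mathbf{x}(t))$ is absolutely continuous and differentiable almost everywhere.

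First I would compute the derivative of $h$ along this trajectory,
\begin{align}
\dot{h}(\mathbf{x}) = L_\mathbf{f}h(\mathbf{x}) + L_\mathbf{\tilde{g}}h(\mathbf{x})\,(\mathbf{u}(\mathbf{x}) + \mathbf{w}), \nonumber
\end{align}
and then lower-bound the uncertainty contribution. The crucial observation is that the worst-case input~\eqref{eq:wcw} solves the inner minimization in~\eqref{eq:RCBF}, so by Cauchy--Schwarz $L_\mathbf{\tilde{g}}h(\mathbf{x})\,\mathbf{w} \geq L_\mathbf{\tilde{g}}h(\mathbf{x})\,\mathbf{w}^*(\mathbf{u})$ holds for every $\mathbf{w}$ satisfying~\eqref{eq:normbnd}; the degenerate case $L_\mathbf{\tilde{g}}h(\mathbf{x}) = 0$ is trivially covered since then both sides vanish. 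Combining this with the defining inequality of membership in $\mathcal{U}_{RCBF}(\mathbf{x})$, namely $p(\mathbf{x}) + L_\mathbf{\tilde{g}}h(\mathbf{x})(\mathbf{u}(\mathbf{x}) + \mathbf{w}^*(\mathbf{u})) \geq 0$ with $p(\mathbf{x}) = L_\mathbf{f}h(\mathbf{x}) + \eta(h(\mathbf{x}))$, yields $\dot{h}(\mathbf{x}(t)) \geq -\eta(h(\mathbf{x}(t)))$ for almost every $t$, uniformly over all admissible $\mathbf{w}$.

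To finish, I would apply the comparison lemma to the scalar system $\dot{y} = -\eta(y)$ with $y(0) = h(\mathbf{x}_0) \geq 0$. Because $\eta \in \mathcal{K}_{\infty,e}$ satisfies $\eta(0)=0$, the constant $y \equiv 0$ is an equilibrium, so any solution with nonnegative initial condition stays nonnegative; the comparison lemma then gives $h(\mathbf{x}(t)) \geq y(t) \geq 0$, i.e. $\mathbf{x}(t) \in \mathcal{C}$ for all $t \geq 0$. Since the nonlinearity was arbitrary within the sector, $\mathcal{C}$ is forward invariant for the uncertain closed-loop~\eqref{eq:uclsys} under every admissible $\phi$, which is precisely robust control invariance.

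The main obstacle is conceptual rather than computational: unlike the exogenous-disturbance robust CBFs of~\cite{xu2015robustness,garg2021robust}, the uncertainty set $\mathcal{W}(\mathbf{u})$ here depends on the control decision, so a worst-case input cannot be fixed before choosing $\mathbf{u}$. The step that makes the argument go through is recognizing that, once the controller value $\mathbf{u}(\mathbf{x})$ is selected, the direction-specific bound $L_\mathbf{\tilde{g}}h(\mathbf{x})\,\mathbf{w} \geq L_\mathbf{\tilde{g}}h(\mathbf{x})\,\mathbf{w}^*(\mathbf{u})$ dominates every realizable uncertainty, so the pointwise min-max condition encoded in $\mathcal{U}_{RCBF}$ suffices to certify the trajectory-level inequality. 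A secondary care-point is that $\eta$ is only assumed continuous and strictly increasing, not Lipschitz, so rather than appealing to uniqueness of the comparison system I would use the one-sided nonnegativity argument above, which needs only $\eta(0)=0$.
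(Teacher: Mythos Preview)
Your proposal is correct but follows a different route from the paper. The paper's proof is a two-line appeal to a generalized Nagumo theorem (Theorem~4.10 in~\cite{blanchini2015set}): it checks only that at any boundary point $\mathbf{x}\in\partial\mathcal{C}$ and for every admissible $\mathbf{w}$ the chosen controller gives $\dot{h}\geq -\eta(h(\mathbf{x}))=0$, and then invokes the tangent-cone/Nagumo characterization of forward invariance directly. You instead establish the trajectory-wide differential inequality $\dot{h}(\mathbf{x}(t))\geq -\eta(h(\mathbf{x}(t)))$ via the Cauchy--Schwarz bound $L_{\tilde{\mathbf{g}}}h\,\mathbf{w}\geq L_{\tilde{\mathbf{g}}}h\,\mathbf{w}^*(\mathbf{u})$ and close with a comparison-lemma argument. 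Both are sound; the Nagumo route is shorter and sidesteps the non-Lipschitz $\eta$ issue you flag, while your approach yields the stronger conclusion $h(\mathbf{x}(t))\geq y(t)$ for the comparison solution, which in particular gives asymptotic attractivity of $\mathcal{C}$ from outside and is closer in spirit to the original CBF arguments of~\cite{ames2016control}. Your explicit handling of the coupling between $\mathcal{W}$ and $\mathbf{u}$---fixing $\mathbf{u}(\mathbf{x})$ first and then dominating all realizable $\mathbf{w}$---is the essential step either way, and you articulate it more carefully than the paper does.
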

\begin{proof} If $h$ is a RCBF on open set $D$ then for any $\mathbf{x}\in\partial\mathcal{C}$ and for all $\mathbf{w}\in\mathcal{W}$,  there exists a control input $\mathbf{u}\in\mathcal{U}$ such that 
$\dot{h}(\mathbf{x,w,u}) \geq \eta(h(\mathbf{x})) = 0$. For a Lipschitz continuous control law $\mathbf{u}(\mathbf{x})\in \mathcal{U}_{RCBF}(\mathbf{x})$, according to generalizations of Nagumo’s theorem (Theorem $4.10$ in~\cite{blanchini2015set}) the closed set $\mathcal{C}$ is robust control invariant.
\end{proof}



\subsection{Optimization-Based Control}
\label{sec:optcontrol}


We can solve the following optimization online to compute a control input $\mathbf{u}$ that minimally alters the baseline input $\mathbf{u}_0$ to ensure safety:
\begin{align}
    \label{eq:RCBFOpt}
    \mathbf{u}^*(\mathbf{x}) =\, &\arg \min_{\mathbf{u}\in\mathcal{U}} \frac{1}{2} \|\mathbf{u} - \mathbf{u_0}\|^2_2\\
    &\mbox{ s.t. } p(\mathbf{x}) + L_\mathbf{\tilde{g}}h(\mathbf{x}) (\mathbf{u} + \mathbf{w}^*(\mathbf{u})) \geq 0\nonumber
\end{align}
Plugging in for $\mathbf{w}^*(\mathbf{u})$ and expanding the cost function yields the following problem with an equivalent optimizer. 
\begin{align}
    \label{eq:RCBFOpt1}
    \mathbf{u}^*(\mathbf{x}) =\, &\arg \min_{\mathbf{u}\in\mathcal{U}} \left[ \frac{1}{2}\mathbf{u}^\top \mathbf{u} - \mathbf{u_0}^\top \mathbf{u} \right]\\
    &\mbox{ s.t. } p(\mathbf{x}) + L_\mathbf{\tilde{g}}h(\mathbf{x}) \mathbf{u} - \theta \|\mathbf{u}\|_2 \|L_\mathbf{\tilde{g}}h(\mathbf{x})\|_2 \geq 0\nonumber
\end{align}
Again, the last term drops out as the uncertainty level $\theta \to 0$, yielding an affine constraint in $\mathbf{u}$. In this case, the above optimization problem is simply a (nominal) \ref{eq:CBFQP}. However, for $\theta>0$, the decision variable $\mathbf{u}$ in the constraint of~\eqref{eq:RCBFOpt1} appears as the Euclidean norm $\|\mathbf{u}\|_2$. Reformulate the problem~\eqref{eq:RCBFOpt1} using a slack variable $q$ as follows:
\begin{align}
    \label{eq:RCBFOpt2}
    \bsmtx\mathbf{u}^*(\mathbf{x})\\ q^*(\mathbf{x}) \esmtx =\, &\arg \min_{ \mathbf{u}\in\mathcal{U},q} \left[q - \mathbf{u_0}^\top \mathbf{u} \right]\\
    &\mbox{ s.t. } p(\mathbf{x}) + L_\mathbf{\tilde{g}}h(\mathbf{x}) \mathbf{u} \geq \theta \|L_\mathbf{\tilde{g}}h(\mathbf{x})\|_2 \|\mathbf{u}\|_2 \nonumber\\
    &\hspace{0.2in} 2q \geq \|\mathbf{u}\|_2^2\nonumber 
\end{align}
This yields a minimization problem over  $\mathbf{u}\in\mathcal{U}$ and $q>0$. The optimal solutions are related by $2q^*={\mathbf{u}^*}^\top \mathbf{u}^*$. The second constraint can be rewritten as a rotated second-order cone (SOC) condition as in Section 10.1 of~\cite{calafiore2014optimization}, which yields the optimization~\eqref{eq:RCBFOpt2} as follows:
\begin{align}\tag{RCBF-SOCP}
    \label{eq:SOCP}
    \bsmtx\mathbf{u}^*(\mathbf{x})\\ q^*(\mathbf{x}) \esmtx =\, &\arg \min_{ \mathbf{u}\in\mathcal{U}, q} \left[q - \mathbf{u_0}^\top \mathbf{u} \right]\\
    &\mbox{ s.t. } \theta \|L_\mathbf{\tilde{g}}h(\mathbf{x})\|_2 \|\mathbf{u}\|_2\leq p(\mathbf{x}) + L_\mathbf{\tilde{g}}h(\mathbf{x}) \mathbf{u} \nonumber\\
    &\hspace{0.2in} \Big\|\bsmtx \sqrt{2}\,\mathbf{u}\\ q - 1 \esmtx\Big\|_2 \leq q + 1\nonumber 
\end{align}
This problem falls under a special class of convex optimization problems known as second-order cone programs (SOCP)~\cite{boyd2004convex}, which can be solved online using existing numerical solvers. At higher uncertainty levels, the~\ref{eq:SOCP} problem (if feasible) yields more cautious (conservative and safe) control actions to prevent the states from entering the unsafe region. For the point-wise feasibility of the~\ref{eq:SOCP}, it is assumed that the set of control inputs $\mathcal{U}$ is not overly restrictive, thus allowing us to have sufficient control authority to maintain safety in the presence of modeled uncertainty. However, an approach similar to~\cite{zeng2021safety} can also be used to relax this assumption.

\subsection{Lipschitz Continuity}
\label{sec:lipcont}

This section discusses a key Lipschitz continuity property of the~\ref{eq:SOCP} problem. In the nominal case ($\theta =0$), if $\mathcal{U}\equiv\R^m$, then the~\ref{eq:CBFQP} has only a single linear constraint in $\mathbf{u}$, and in this special case, there is an explicit solution. This solution can be used to show that the resulting safety filter is a locally Lipschitz continuous function of the state $\mathbf{x}\in D$ (Theorem~$8$ of~\cite{xu2015robustness}). For the robust case ($\theta \neq 0$), the optimization problem~\eqref{eq:RCBFOpt} can compactly be written as:
\begin{align}
    \label{eq:RCBFOpt3}
    \mathbf{u}^*(\mathbf{x}) =\, &\arg \min_{\mathbf{u}\in\mathcal{U}_{RCBF}(\mathbf{x})} \frac{1}{2} \|\mathbf{u} - \mathbf{u_0}\|^2_2
\end{align}
To the best of our knowledge, there is no explicit solution to this general problem. However, it is a standard projection problem over the parameterized non-empty closed convex set, i.e. the optimizer $\mathbf{u}^*(\mathbf{x})$ is a projection of $\mathbf{u}_0 = \mathbf{k}_0(\mathbf{x})$ onto the set $\mathcal{U}_{RCBF}(\mathbf{x})$. The main results in Section~$6$ of~\cite{bednarczuk2020lipschitz} show that $\mathbf{u^*(x)}$ is a locally Lipschitz continuous function of $\mathbf{x}$, if the set $\mathcal{U}_{RCBF}(\mathbf{x})$ is described by polyhedral constraints parameterized by the state $\mathbf{x}$. We conjecture that $\mathbf{u^*(x)}$ remains locally Lipschitz when $\mathcal{U}_{RCBF}(\mathbf{x})$ is described by the two SOC constraints as in the~\ref{eq:SOCP} problem. Future work will focus on investigating this further. 

The remainder of this section presents a Lipschitz continuity result for the special case of scalar control input $u\in\R$. For this case $m=1$ and $L_{\tilde{\mathbf{g}}} h(\mathbf{x})\in\R$. The optimization~\eqref{eq:RCBFOpt} can be written as follows:
\begin{align}
    \label{eq:sclarporb}
    u^*(\mathbf{x}) =\, &\arg \min_{u} \frac{1}{2} (u - u_0)^2 \\
    &\mbox{s.t. } p(\mathbf{x}) + L_{\tilde{\mathbf{g}}} h(\mathbf{x})\, u - \theta |u| |L_{\tilde{\mathbf{g}}}h(\mathbf{x})| \geq 0\nonumber
\end{align}
The next theorem makes a formal statement about the Lipschitz continuity of the function $u^*$. A direct and more concise (independent of~\cite{bednarczuk2020lipschitz}) proof is provided in this case.

\begin{theorem}
Assume $\mathbf{f}:D\subset \R^n \rightarrow \R^n$, $\mathbf{g}:D\subset \R^n \rightarrow \R^{n\times1}$, $\eta\in\mathcal{K}_{\infty,e}$ and $u_0$ are given locally Lipschitz continuous functions. Let $u\in\R$ and $h : D\subset \R^n \rightarrow \R$ be a locally Lipschitz continuous robust control barrier function. Moreover, let $L_{\tilde{\mathbf{g}}}h(\mathbf{x})\neq0$. Then the solution, $u^*(\mathbf{x})$ of~\eqref{eq:sclarporb} is a locally Lipschitz continuous function of $\mathbf{x}\in D$.
\end{theorem}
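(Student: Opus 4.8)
The plan is to solve the scalar projection \eqref{eq:sclarporb} in closed form and then assemble $u^*$ from pieces that are each manifestly locally Lipschitz. To lighten notation, abbreviate $p = p(\mathbf{x})$, $g = L_{\tilde{\mathbf{g}}}h(\mathbf{x})$, and $a = |g|$; under the stated hypotheses the data combine so that $\mathbf{x}\mapsto p(\mathbf{x})$, $\mathbf{x}\mapsto g(\mathbf{x})$, and $\mathbf{x}\mapsto u_0(\mathbf{x})$ are locally Lipschitz. First I would localize: fix an arbitrary $\mathbf{x}_*\in D$; since $g$ is continuous and $g(\mathbf{x}_*)\neq 0$, there is a neighborhood $N\ni\mathbf{x}_*$ on which $g$ keeps a constant sign and $a(\mathbf{x})\geq c>0$ for some constant $c$. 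This confines us to a single sign case on $N$ and, crucially, bounds $a$ away from zero so that $1/a$ is locally Lipschitz there.

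Next I would identify the feasible set. The constraint function $\psi(u):=p+gu-\theta a|u|$ is concave in $u$ (affine plus the concave term $-\theta a|u|$), so its zero-superlevel set is an interval; examining the left/right slopes $g+\theta a$ and $g-\theta a$ shows that when $g>0$ both slopes are positive and the feasible set is a half-line $[u_{\min}(\mathbf{x}),\infty)$, while when $g<0$ both slopes are negative and it is $(-\infty,u_{\max}(\mathbf{x})]$. The RCBF hypothesis guarantees this set is nonempty. Projecting $u_0$ onto a half-line is explicit: for $g>0$ one obtains $u^*(\mathbf{x})=\max\{u_0(\mathbf{x}),u_{\min}(\mathbf{x})\}$, and for $g<0$, $u^*(\mathbf{x})=\min\{u_0(\mathbf{x}),u_{\max}(\mathbf{x})\}$.

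The crux is showing the endpoint is locally Lipschitz, where the delicate feature is the kink at $p=0$, at which the active slope switches between $g\mp\theta a$. Solving $\psi(u)=0$ on each half gives a piecewise formula, but writing $p=p^+-p^-$ with $p^+=\max\{p,0\}$ and $p^-=\max\{-p,0\}$ collapses it into the single expression
\begin{align}
u_{\min}(\mathbf{x}) = \frac{1}{a(\mathbf{x})}\left[\,\frac{p^-(\mathbf{x})}{1-\theta} - \frac{p^+(\mathbf{x})}{1+\theta}\,\right],\nonumber
\end{align}
which automatically matches both branches and is continuous across $p=0$ (both terms vanish there); the symmetric expression holds for $u_{\max}$. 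Because $p^+$ and $p^-$ are locally Lipschitz (postcomposition of the locally Lipschitz $p$ with the $1$-Lipschitz maps $\max\{\cdot,0\}$ and $\max\{-\cdot,0\}$), and $1/a$ is locally Lipschitz on $N$ by the lower bound above, the product and sum form a locally Lipschitz function as a combination of locally bounded locally Lipschitz functions.

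Finally I would close the argument with the standard fact that the pointwise maximum (resp. minimum) of two locally Lipschitz functions is locally Lipschitz: hence $u^*=\max\{u_0,u_{\min}\}$ (or $\min\{u_0,u_{\max}\}$) is locally Lipschitz on $N$, and since $\mathbf{x}_*$ was arbitrary, $u^*$ is locally Lipschitz throughout $D$. I expect the main obstacle to be the $p=0$ kink together with the reciprocal $1/a$: the positive/negative-part rewriting disposes of the former, while the neighborhood-based lower bound $a\geq c>0$ (available precisely because $L_{\tilde{\mathbf{g}}}h\neq 0$ is assumed) is exactly what keeps $1/a$ Lipschitz and clarifies why that hypothesis is needed.
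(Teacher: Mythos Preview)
Your proposal is correct and follows essentially the same route as the paper: localize to a neighborhood where $L_{\tilde{\mathbf{g}}}h$ has fixed sign and is bounded away from zero, identify the feasible set as a half-line, write the endpoint explicitly, and conclude by closure of local Lipschitz continuity under ratios (with nonvanishing denominator) and pointwise $\max/\min$. The only cosmetic difference is that the paper expresses the endpoint as $u_l(\mathbf{x})=\max\{\,-p/((1-\theta)L_{\tilde{\mathbf{g}}}h),\,-p/((1+\theta)L_{\tilde{\mathbf{g}}}h)\,\}$ while you use the $p^+,p^-$ decomposition; these are equivalent piecewise descriptions invoking the same Lipschitz-preservation facts.
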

\begin{proof}
Assume $L_{\tilde{\mathbf{g}}}h(\mathbf{x}) > 0$ and $0\leq\theta<1$. The constraint from~\eqref{eq:sclarporb} can be written as:
\begin{align}
\label{eq:constraint1}
    u - \theta |u| \geq \frac{-p(\mathbf{x})}{L_{\tilde{\mathbf{g}}}h(\mathbf{x})} 
\end{align}
If $\frac{-p(\mathbf{x})}{L_{\tilde{\mathbf{g}}}h(\mathbf{x})}\geq 0$ then $u\geq0$. Hence the constraint~\eqref{eq:constraint1} is:
\begin{align}
\label{eq:constraint2}
    u \geq \frac{-p(\mathbf{x})}{(1-\theta)L_{\tilde{\mathbf{g}}}h(\mathbf{x})} 
\end{align}
If $\frac{-p(\mathbf{x})}{L_{\tilde{\mathbf{g}}}h(\mathbf{x})}< 0$ then~\eqref{eq:constraint1} is satisfied for any $u\geq0$. Moreover it is satisfied for negative values of $u$ that satisfy:
\begin{align}
\label{eq:constraint3}
    u \geq \frac{-p(\mathbf{x})}{(1+\theta)L_{\tilde{\mathbf{g}}}h(\mathbf{x})} 
\end{align}
Thus the constraint in~\eqref{eq:sclarporb} is a parameterized interval of the form $u(\mathbf{x})\in [u_l(\mathbf{x}),+\infty)$ where $u_l(\mathbf{x})$ is given by:
\begin{align}
  u_l(\mathbf{x}) = \max\left\{
  \frac{-p(\mathbf{x})}{(1-\theta)L_{\tilde{\mathbf{g}}}h(\mathbf{x})},\, \frac{-p(\mathbf{x})}{(1+\theta)L_{\tilde{\mathbf{g}}}h(\mathbf{x})}\right\}
\end{align}
The functions $p$ and $L_{\tilde{\mathbf{g}}}h$ are locally Lipschitz continuous, since $\mathbf{f}$, $\mathbf{g}$, $\eta$ and $h$ are locally Lipschitz continuous by assumption. Note that $L_{\tilde{\mathbf{g}}}h(\mathbf{x})\neq0$, because $L_{\tilde{\mathbf{g}}}h(\mathbf{x})$ is Lipschitz continuous, so there exists an $\epsilon>0$ and a ball around $\mathbf{x}$ such that $|L_{\tilde{\mathbf{g}}}h(\mathbf{x})|\geq\epsilon>0$. Thus, by Proposition~$1.30$ and Corollary~$1.31$ of~\cite{weaver2018lipschitz}, the ratio $\frac{-p(\mathbf{x})}{L_{\tilde{\mathbf{g}}}h(\mathbf{x})}$ is also locally Lipschitz continuous. The boundary function $u_l$ is locally Lipschitz continuous, because the point-wise maximum/minimum of two Lipschitz functions is also Lipschitz by Proposition $1.32$ of~\cite{weaver2018lipschitz}. Finally, the optimal solution of~\eqref{eq:sclarporb} can be written as $u^*(\mathbf{x}) = \max \{u_l(\mathbf{x}),u_0\}$. The baseline controller $u_0$ is assumed to be locally Lipschitz continuous. Thus, again using the Proposition~$1.32$ of~\cite{weaver2018lipschitz}, we have that $u^*$ is a locally Lipschitz continuous function of $\mathbf{x}\in D$. The case $L_{\tilde{\mathbf{g}}}h(\mathbf{x}) < 0$ can be handled similarly. 
\end{proof}

\subsection{Extensions}
\label{sec:ext}

\subsubsection{Sector-Bound for Individual Control Channels}
\label{sec:sectbndforindcontrolchannel}
Consider the case where the control input $\mathbf{u}(t) \in \R^m$ is vector valued ($m>1$). The sector bound corresponding to the constraint~\eqref{eq:sectorqc} allows for uncertainty that is coupled across input channels. An alternative model is to treat each input as having its own sector-bounded nonlinearity with no cross-coupling. In this case, the dynamics~\eqref{eq:mappedsys} can be written as:
\begin{align*}
    \dot{\mathbf{x}}(t) = \mathbf{f}(\mathbf{x}(t)) + \sum_{i=1}^{m} \tilde{\mathbf{g}}_i(\mathbf{x}(t)) \,(u_i(t) + w_i(t)), \hspace{0.1in} \mathbf{x}(0) = \mathbf{x}_0
\end{align*}
where $w_i(t)= \Delta_i(u_i(t),t)$, and each $\Delta_i$ satisfies the constraint $|w_i(t)| \leq \theta_i |u_i(t)|$, $\forall t \geq 0$. The worst-case uncertain input for an individual channel can be obtained as $w^*_i = -\theta_i |u_i| \mbox{sgn}(L_{\tilde{\mathbf{g}}_i}h(x))$. The linear programming trick (presented in Appendix~\ref{sec:linprogtrick} for a scalar input) can be used to separate positive and negative parts of the individual control input $u_i$. This yields a quadratic program for the safety filter.

\subsubsection{Robust Exponential CBF} 
\label{sec:RECBF}
The previous section presented robust control barrier functions with relative degree one, i.e. control input $\mathbf{u}$ shows up after differentiating the function $h$ once, which implies that $L_\mathbf{\tilde{g}}h(\mathbf{x}) \neq 0$ in Definition~\ref{def:RCBF}. However, in general $h$ can have a higher relative degree. Note that the dynamics in Equation~\eqref{eq:mappedsys} has the uncertain input $\mathbf{w}$ matched with the control input $\mathbf{u}$. This matching condition allows us to naturally extend the RCBF framework to higher relative degree robust CBFs (referred to as robust exponential CBFs or RECBF). The theory is similar to that of the (nominal) exponential CBF combined with the robustness argument already presented in this paper before. More details on (nominal) ECBF are provided in~\cite{nguyen2016exponential} with an overview in~\cite{ames2019control}. The example in Section~\ref{sec:example} demonstrates the RECBF design approach for the relative degree of two.

\subsubsection{Unifying with RCLF}
\label{sec:RCLF} 
Sometimes stability and safety objectives are in direct conflict~\cite{ames2019control}. If the baseline controller is not designed with robustness as a consideration, uncertainty may lead to unstable behavior before safety violation becomes an issue. A careful design should first consider providing a robust stability guarantee for the baseline controller using a robust control Lyapunov function (RCLF) (see Chapter 3 of~\cite{freeman2008robust}). The stability and safety objectives can also be combined in a single multi-objective optimization problem to design the controller $\mathbf{k(x)}$ in~\figref{fig:designic}. For nominal design ($\theta=0$) this problem is referred to as CLF-CBF QP~\cite{ames2019control}. A similar design can also be considered for the robust ($\theta>0$) counterpart, which yields a RCLF-RCBF SOCP problem. A specific implementation uses a hard constraint with a RCBF to enforce robust safety and a soft constraint with a RCLF (using a slack variable) to approximately enforce robust stability.

\subsubsection{Parametric Uncertainties}
\label{sec:param}
The robust design approach presented in this paper can also be extended to systems with uncertain parameters in the input function $\mathbf{g}(\mathbf{x})$. Let the system dynamics be given by:
\begin{align*}
    \mathbf{\dot{x}} = \mathbf{f}_0(\mathbf{x})
    + \left[\mathbf{g}_0(\mathbf{x}) + \sum_{i=1}^{n_p} \mathbf{g}_i(\mathbf{x})\delta_i\right]\mathbf{u}, \hspace{0.1in} \mathbf{x}(0) = \mathbf{x}_0
\end{align*}
where the functions $\mathbf{f}_0$ and $\mathbf{g}_0$ capture the nominal dynamics. The remaining terms capture the effects of uncertain, real parameters $\{ \delta_i \}_{i=1}^{n_p}$. Each parameter variation is assumed to be normalized such that $|\delta_{i}| \leq \theta_i$. 
Define $\mathbf{w}_i := \delta_i\mathbf{u}$ and rewrite the constraint on $\delta_i$ as an individual norm-bound constraint $\|\mathbf{w}_i\| \leq \theta_i \|\mathbf{u}\|_2$. Definition~\ref{def:RCBF} can be modified to consider inner optimization over each $\mathbf{w}_i$. The worst-case $\mathbf{w}^*_i$ for the inner optimization is then given by an expression similar to that of~\eqref{eq:wcw} using each $\theta_i$ and $L_{\tilde{\mathbf{g}}_i}h(\mathbf{x})$.

\begin{rem}
If $n_p=1$, $\mathbf{f_1(x)} = 0$ and $\mathbf{g_1(x)} = \mathbf{g_0(x)}$, then $\mathbf{\dot{x}} = \mathbf{f_0(x)} + \mathbf{g_0(x)\,(u + w)}$, with $\mathbf{w} = \delta_1\mathbf{u}$ and $|\delta_1 | \leq \theta_1$. These dynamics are similar to those in~\eqref{eq:mappedsys}.  This special case corresponds to a gain variation at the plant input as appears in the classical gain margin calculation.
\end{rem}

\section{Example: Vehicle Lateral Control}
\label{sec:example}


Consider a vehicle being driven on a straight road that must avoid a stationary obstacle. The lateral dynamics of vehicle are linearized at a constant longitudinal speed to obtain the following linear time-invariant (LTI) model~\cite{alleyne1997comparison}.
\begin{align}
   \dot{\mathbf{x}}(t) &= \mathbf{A}\mathbf{x}(t) + \mathbf{B}(u(t)+w(t)),\,\,\, \mathbf{x}(0) = \mathbf{x_0}\\
    |w(t)| &\leq \theta\,|u(t)| \nonumber
\end{align}
where $\mathbf{x}(t) = \bsmtx e(t) & \dot{e}(t) & \psi(t) & \dot{\psi}(t)\esmtx^\top\in\R^4$ is the linearized state and the control $u(t)\in \R$ is the front wheel steering angle input. The model from~\cite{alleyne1997comparison} is slightly modified to include the uncertain input $w(t)\in\R$, which represents nonlinearities (e.g. allowable saturation) and/or time-varying uncertainties. Here, $e$ is the lateral distance to the lane center and $\psi$ is the vehicle heading relative to the path. The longitudinal distance and velocity along the road are denoted by $s$ and $\dot{s} = 28\, m/s$ respectively. The longitudinal dynamics of the vehicle are not controlled. The state-space matrices and the vehicle parameters are given in~\cite{alleyne1997comparison} and are provided in Appendix~\ref{sec:vehicledynparam} for completeness. A baseline state-feedback controller was designed using linear quadratic regulator with cost matrices $Q=diag(10,1,\frac{1}{30},1)$ and $R=5$.
This was implemented to track the reference command $\mathbf{r}(t)\in\R^4$ as:
\begin{align*}
    u_0 = \mathbf{K}\cdot\mathbf{(r - x)},\,\,\,
    \mbox{where } \mathbf{K} =\bsmtx 1.41  &  0.41 & 3.30 & 0.24 \esmtx.
\end{align*}
This differs slightly from the feedback diagram in~\figref{fig:designic} due to
the inclusion of the reference command, i.e. the baseline
controller is of the form $u_0 = k_0(x, r)$. 

A stationary obstacle of radius $d$ is assumed at the origin. The safe set $\mathcal{C}$ is defined by Equation~\eqref{eq:safeset} with $h(\mathbf{x}) = e^2 + s^2 -d^2 \geq 0$ where $d$ is chosen based on geometries of the vehicle and the obstacle. The first and second time-derivatives of $h$ along a trajectory of state $\mathbf{x}$ are given by:
\begin{align*}
    \dot{h}(\mathbf{x}) = 2 e \dot{e} + 2 s \dot{s}, \,\,\,\,
    \ddot{h}(\mathbf{x}, u, w) = 2 e \ddot{e}(u,w) + 2\dot{e}^2 + 2\dot{s}^2
\end{align*}
Since the control input $u$ and uncertain input $w$ both appear in $\ddot{e}$, the system has relative degree two. This requires the robust exponential CBF as discussed in Section~\ref{sec:RECBF} to ensure safety. The set $\mathcal{U}\equiv\R$ is considered for simplicity. 


The initial state of the vehicle is $\mathbf{x}_0 = \bsmtx 2 & 0 & 0 & 0 \esmtx^\top$ with $s(0) = -20$. The safe distance $d$ is chosen as $3$ meters. The reference trajectory is selected to track the center of the lane, i.e. $\mathbf{r}(t) = \mathbf{0}\in\R^4$. The safety filter is designed using the RECBF-SOCP approach. The MATLAB implementation using the function~\texttt{coneprog} is available online at~\cite{github}.

Let the uncertainty level be $\theta = 0.5$ for the simulation study. \figref{fig:VLC_CBF_comparison} shows the simulation results for the uncertain plant with the worst-case uncertainty as in Equation (\ref{eq:wcw}). The shaded red circle represents the unsafe region. The ECBF and RECBF design poles are chosen to be two repeated poles at $-30$. Note that the nominal LQR controller runs into the obstacle due to its lack of safety consideration. The (nominal) ECBF-QP safety filter does not explicitly account for the uncertainty. Thus, its trajectory slightly violates the safety requirement around $s=0$ or $t\approx0.7$ seconds. The RECBF-SOCP trajectory avoids the obstacle successfully by choosing cautious control input to account for the uncertainty.
\begin{figure}[t]
    \centering
    \includegraphics[width=\linewidth]{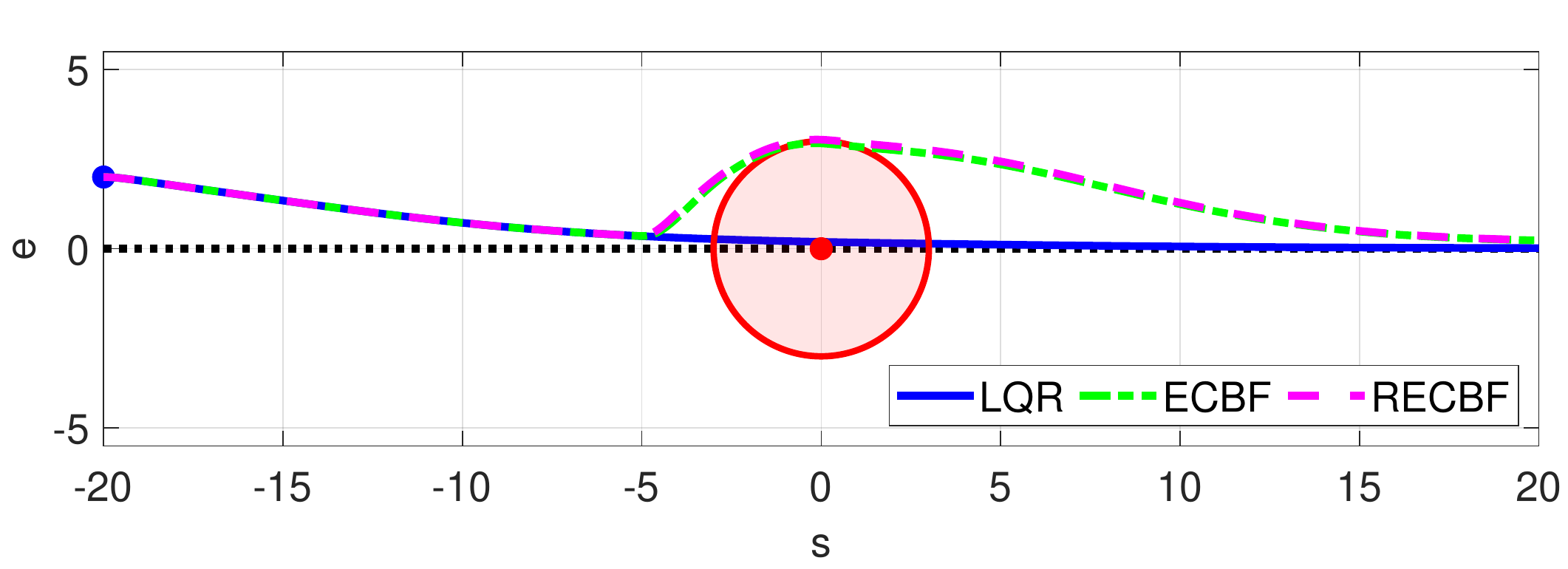}
    \includegraphics[width=\linewidth]{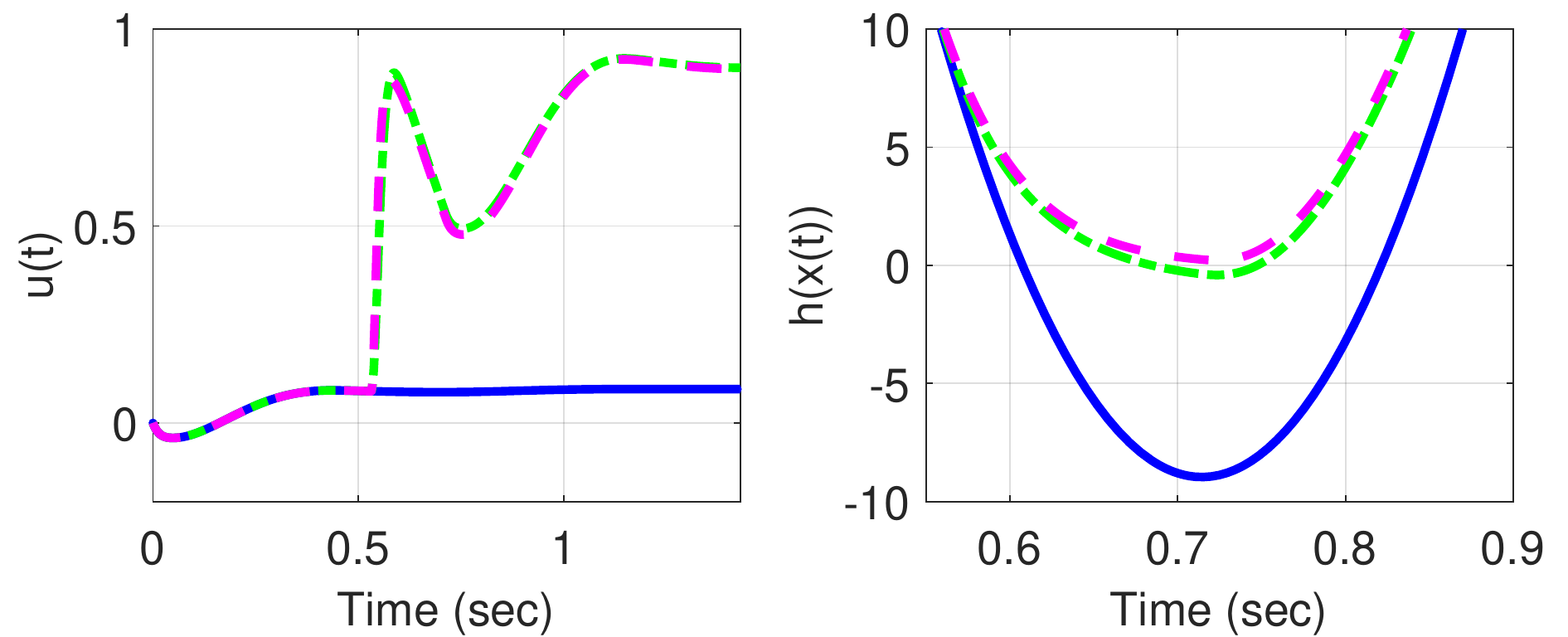}
    \caption{LQR, ECBF, and RECBF simulations with worst-case plant.}
    \label{fig:VLC_CBF_comparison}
\end{figure}

Next, the simulations are performed on the nominal plant. In other words the RECBF-SOCP controller is designed assuming $\theta>0$ but the simulations are performed on a nominal plant without the nonlinearity. \figref{fig:VLC_uncertainLevel} shows the RECBF-SOCP simulations with the same initial condition, but with the safety filter designed at different assumed values for the uncertainty bound $\theta$. Only the zoomed region around the obstacle is shown. It is observed that, as the model uncertainty level in design increases from $\theta = 0.2$ to $0.8$ the RECBF-SOCP generates more cautious trajectories around the obstacle. This is due to the fact that the proposed design explicitly consider the uncertainty at the plant input and yields robustly-safe control actions.
\begin{figure}[t]
    \centering
    \includegraphics[width=\linewidth]{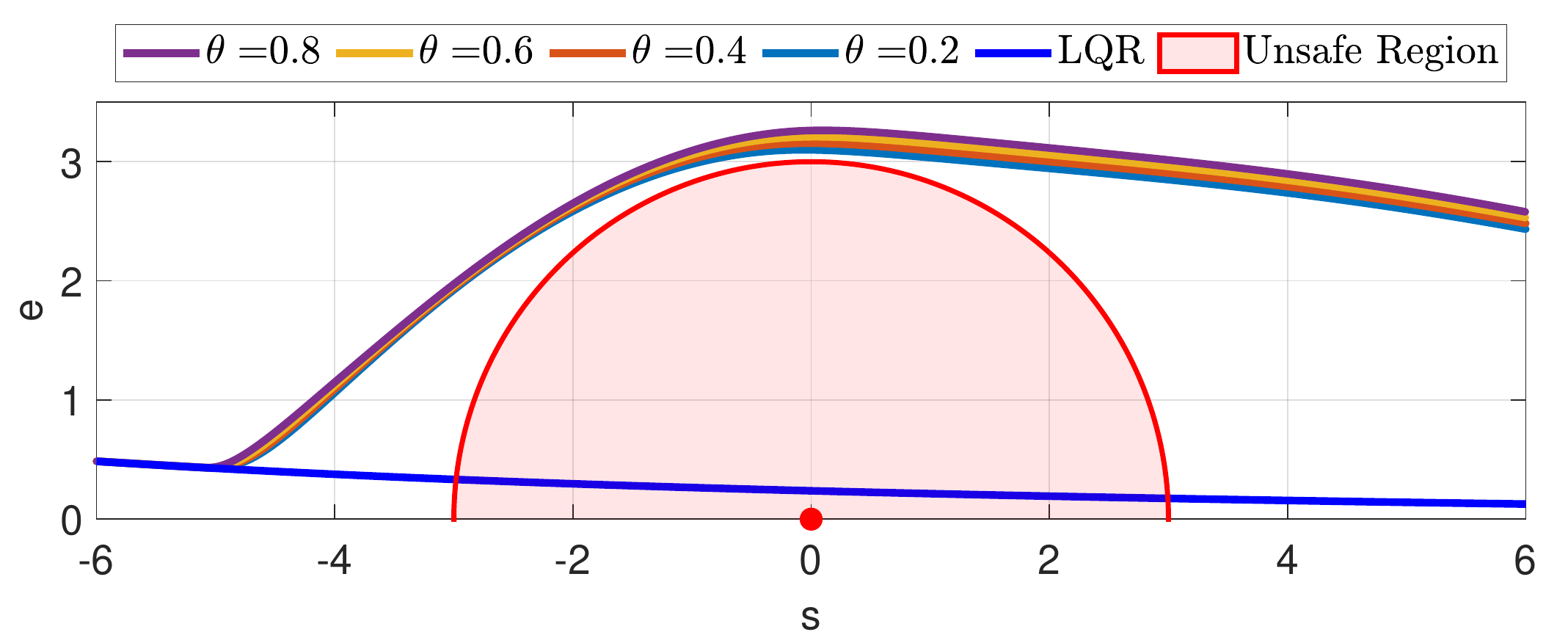}
    \caption{RECBF trajectories with varying uncertainty level $\theta$.}
    \label{fig:VLC_uncertainLevel}
\end{figure} 


\section{Conclusion}
\label{sec:conclusions}
This paper presents a robust control barrier function approach to handle sector-bounded uncertainties at the plant input. The proposed optimization problem can be written in terms of the second-order cone-program to be solved online. The robustness of the designed controller was studied in a lateral vehicle control example.

\section{Acknowledgments}
We thank Prof. Murat Arcak, Kate Schweidel and Adnane Saoud at the University of California, Berkeley for valuable discussions. We also thank Doug Philbrick at the NAWCWD China Lake for helpful comments.

	
\bibliographystyle{ieeetr}
\bibliography{RCBFMain}

\begin{appendices}
\section{Worst-Case Uncertain Input}
\label{sec:wcw}
Let $\lambda\geq 0$ be a Lagrange multiplier to rewrite the inner optimization problem~\eqref{eq:RCBF} as follows:
\begin{align*}
    \inf_{\mathbf{w}} \sup_{\lambda \geq 0} \big[  p(\mathbf{x}) + L_\mathbf{\tilde{g}}h(\mathbf{x}) (\mathbf{u} + \mathbf{w}) -\lambda (\theta^2 \mathbf{u}^\top \mathbf{u} - \mathbf{w}^\top \mathbf{w})\big] \geq 0 \nonumber
\end{align*}
Define the cost in bracket as $J(\mathbf{u},\mathbf{w},\lambda)$.
Strong duality holds because the objective and constraints are both convex~\cite{boyd2004convex}, thus $\sup_{\lambda \geq 0}\, \inf_{\mathbf{w}} J(\mathbf{u},\mathbf{w},\lambda) \geq 0$. Complete the square in $\mathbf{w}$ by adding and subtracting the term $\frac{1}{4\lambda} L_\mathbf{\tilde{g}}h(\mathbf{x})L_\mathbf{\tilde{g}}h(\mathbf{x})^\top$ in the cost function $J(\mathbf{u},\mathbf{w},\lambda)$. Then, minimizing over $\mathbf{w}$ yields the worst-case uncertain input as:
\begin{align}
    \label{eq:wcwl}
    \mathbf{w}^* = - \frac{1}{2\lambda^*} L_\mathbf{\tilde{g}}h(\mathbf{x})^\top
\end{align}
Plug in $\mathbf{w}^*$ in the cost function $J(\mathbf{u},\mathbf{w},\lambda)$ to obtain the cost function $J(\mathbf{u},\lambda)$ as follows:
\begin{align*}
    J(\mathbf{u},\lambda) := p(\mathbf{x}) &+ L_\mathbf{\tilde{g}}h(\mathbf{x}) \mathbf{u} -\lambda \theta^2 \mathbf{u}^\top \mathbf{u} - \frac{1}{4\lambda} L_\mathbf{\tilde{g}}h(\mathbf{x})L_\mathbf{\tilde{g}}h(\mathbf{x})^\top
\end{align*}
Use the first-order optimality condition to solve for optimal Lagrange multiplier $\lambda^*$ as follows:
\begin{align}
\lambda^* = \frac{\|L_\mathbf{\tilde{g}}h(\mathbf{x})\|_2}{2\theta\|\mathbf{u}\|_2}
\end{align} Plug $\lambda^*$ in~\eqref{eq:wcwl} to obtain the expression for $\mathbf{w}^*(\mathbf{u})$ as in~\eqref{eq:wcw}. 

\section{Linear Programming Trick}
\label{sec:linprogtrick}

This section demonstrates that the RCBF optimization in~\eqref{eq:RCBFOpt1} can be re-formulated as a linear program if the input is scalar. If $u_0$ satisfies the constraint in~\eqref{eq:RCBFOpt1} then $u^*=u_0$ and there is no need to run the optimization.  However, if $u_0$ is infeasible then:
\begin{align}
    \label{eq:infeasibleconstraint}
    p(\mathbf{x}) + L_{\tilde{\mathbf{g}}} h(\mathbf{x})\, u_0 - \theta |u_0| |L_{\tilde{\mathbf{g}}}h(\mathbf{x})| < 0
\end{align}
In this case, substitute $u = u_p - u_n$ and $|u| = u_p + u_n$ to rewrite the optimization problem~\eqref{eq:RCBFOpt1} as follows:
\begin{align}
\label{eq:linprogtrick}
   &\bsmtx u_p^*(\mathbf{x})\\ u_n^*(\mathbf{x}) \esmtx = arg \min_{u_p,u_n} \frac{1}{2} \|(u_p - u_n) - u_0\|^2_2 \\
    &\hspace{0.1in}\mbox{s.t. } p(\mathbf{x}) + L_\mathbf{\tilde{g}}h(\mathbf{x}) (u_p - u_n) - \theta (u_p + u_n) |L_\mathbf{\tilde{g}}h(\mathbf{x})| \geq 0\nonumber\\
  &\hspace{0.3in} u_p \geq 0,\, u_n \geq 0\nonumber
\end{align}
The reformulated problem has linear constraints. Hence this is a quadratic program with decision variables $u_p\geq 0$ and $u_n\geq 0$. This problem can be efficiently solved online instead of~\ref{eq:SOCP}. 

Next, we show that solutions of optimization problem~\eqref{eq:linprogtrick} are unique, i.e. $u_p$ and $u_n$ both can not simultaneously be greater than $0$, one of the variables must be zero for another to be nonzero. To show this by contradiction, let's assume that $u_p^* > 0$ and $u_n^* > 0$. There are $3$ cases to consider: 
\underline{Case-1:} If $u_p^* - u_n^* = u_0$ then the constraint with $(u_p^*,u_n^*)$ is 
\begin{align}
     0 &\leq p(\mathbf{x}) + L_\mathbf{\tilde{g}}h(\mathbf{x}) u_0 - \theta (u_p^* + u_n^*) |L_\mathbf{\tilde{g}}h(\mathbf{x})| \nonumber\\
    &\leq p(\mathbf{x}) + L_\mathbf{\tilde{g}}h(\mathbf{x}) u_0 - \theta |u_0| |L_\mathbf{\tilde{g}}h(\mathbf{x})|
\end{align}
This follows from $|u_0| \leq \max(u_p^* - u_n^*, u_n^* - u_p^*) \leq u_p^* + u_n^*$, which implies that $u_0$ is feasible. However, this case cannot happen as we do not run the optimization if $u_0$ is feasible.\\
\underline{Case-2:} Assume $u_p^* - u_n^* < u_0$. If $L_\mathbf{\tilde{g}}h(\mathbf{x}) > 0$ then define $\tilde{u}_n := u_n^* - \epsilon$ for $\epsilon > 0$. The assumption $u_n^* > 0$ implies that $\tilde{u}_n > 0$ for sufficiently small $\epsilon$ and the constraint remains satisfied with $(u_p^*, \tilde{u}_n)$ because,
\begin{align}
     &p(\mathbf{x}) + L_\mathbf{\tilde{g}}h(\mathbf{x}) (u_p^* - \tilde{u}_n) - \theta (u_p^* + \tilde{u}_n) |L_\mathbf{\tilde{g}}h(\mathbf{x})| \nonumber\\
&=\left[p(\mathbf{x}) + L_\mathbf{\tilde{g}}h(\mathbf{x}) (u_p^* - u_n^*) - \theta (u_p^* + u_n^*) |L_\mathbf{\tilde{g}}h(\mathbf{x})|\right]\ldots\nonumber\\ &\hspace{0.5in}+\epsilon\, (L_\mathbf{\tilde{g}}h(\mathbf{x}) + \theta|L_\mathbf{\tilde{g}}h(\mathbf{x})|) \geq 0
\end{align}
The term in brackets is $\geq 0$ by feasibility of $(u_p^*,u_n^*)$ and the term with $\epsilon$ is $\geq 0$ because $L_\mathbf{\tilde{g}}h(\mathbf{x}) > 0$. Thus, the pair $(u_p^*,\tilde{u}_n)$ is feasible and moreover it gives lower cost than $(u_p^*,u_n^*)$, because we have slightly increased $u_p^* - \tilde{u}_n$ toward $u_0$. If $L_\mathbf{\tilde{g}}h(\mathbf{x})<0$ then set $\tilde{u}_p = u_p^* + \epsilon$ and follow a similar argument to show that $(\tilde{u}_p,u_n^*)$ is feasible and gives lower cost than $(u_p^*,u_n^*)$. Thus, in either case ($L_\mathbf{\tilde{g}}h(\mathbf{x})<0$ or $>0$) we have that if $u_p^* > 0$ and $u_n^* > 0$ then it can not be an optimal point. Hence, by contradiction at least one of them must be zero.\\
\underline{Case-3:} Assume $u_p^*-u_n^*> u_0$. This case is similar to Case-$2$ and is not included.

\section{Vehicle Dynamics and Parameters}
\label{sec:vehicledynparam}

The state-space matrices are given by:
\begin{align*}
    \mathbf{A} &= \bmtx
     0 & 1 & 0 & 0  \\
     0 & \frac{C_{\alpha f}+C_{\alpha r}}{mU}  & -\frac{C_{\alpha f}+C_{\alpha r}}{m} & \frac{aC_{\alpha f}-bC_{\alpha r}}{mU}  \\
     0 & 0 & 0 & 1  \\
     0 & \frac{aC_{\alpha f}-bC_{\alpha r}}{I_zU} & \frac{aC_{\alpha f}-bC_{\alpha r}}{I_z} & \frac{a^2C_{\alpha f}+b^2C_{\alpha r}}{I_zU}   \\
    \emtx\\
    \mathbf{B} &= \bmtx
     0 \\ -\frac{C_{\alpha f}}{m} \\ 0 \\ -\frac{aC_{\alpha f}}{I_z}  
    \emtx 
\end{align*}

The vehicle parameters from~\cite{alleyne1997comparison} are given as follows:
\begin{table}[h]
    \label{tab:veh_para}
    \begin{tabular}{| l | l |l| }
        \hline
         $m$     & Vehicle mass                              & 1.67$\times 10^3$ kg  \\
         $I_z$   & Vehicle moment of inertia                 & 2.1$\times10^3$ kg-m$^2$ \\
         $a$     & Distance from vehicle CG to front axle    & 0.99 m    \\
         $b$     & Distance from vehicle CG to rear axle     & 1.7 m \\
         $U$     & Longitudinal velocity                     & 28 m/s \\
         $C_{\alpha f}$    & Front cornering stiffness     & -1.23$\times 10^5$ N/rad \\
         $C_{\alpha r}$    & Rear cornering stiffness      & -1.042$\times 10^5$ N/rad \\
         \hline
    \end{tabular}
\end{table}

\end{appendices}
\end{document}